\definecolor{red}{RGB}{255,25,25}
\definecolor{blue}{RGB}{25,50,200}
\newtheorem{theorem}{Theorem}
\crefname{theorem}{Theorem}{Theorems}
\newtheorem{lemma}[theorem]{Lemma}
\crefname{lemma}{Lemma}{Lemmas}
\crefname{proposition}{Proposition}{Propositions}
\crefname{prop}{Proposition}{Propositions}
\crefname{corollary}{Corollary}{Corollaries}
\crefname{cor}{Corollary}{Corollaries}
\crefname{conjecture}{Conjecture}{Conjectures}
\crefname{conj}{Conjecture}{Conjectures}
\newtheorem*{conj*}{Conjecture}
\crefname{conj}{Conjecture}{Conjectures}
\crefname{conjA}{Conjecture}{Conjecture}
\crefname{conjB}{Conjecture}{Conjecture}
\crefname{conjC}{Conjecture}{Conjecture}
\crefname{conjDk}{Conjecture}{Conjecture}
\crefname{conjD}{Conjecture}{Conjecture}
\crefname{conjH}{Conjecture}{Conjecture}
\crefname{conjGr}{Conjecture}{Conjecture}
\crefname{conjSGr}{Conjecture}{Conjecture}
\theoremstyle{definition}
\newtheorem{definition}[theorem]{Definition}
\crefname{definition}{Definition}{Definitions}
\crefname{defn}{Definition}{Definitions}
\crefname{example}{Example}{Examples}
\crefname{notation}{Notation}{Notation}
\newtheorem*{notation*}{Notation}
\crefname{notation}{Notation}{Notation}
\crefname{problem}{Problem}{Problems}
\crefname{question}{Question}{Questions}
\crefname{condition}{Condition}{Conditions}
\crefname{assumption}{Assumption}{Assumptions}
\crefname{propGr}{Effective Property}{Effective Property}
\theoremstyle{remark}
\crefname{rmk}{Remark}{Remarks}
\newtheorem*{rmk*}{Remark}
\crefname{rmk}{Remark}{Remarks}
\newtheorem{remark}[theorem]{Remark}
\crefname{remark}{Remark}{Remarks}
\crefname{fact}{Fact}{Facts}
\crefname{claim}{Claim}{Claims}
\newtheorem*{claim*}{Claim}
\crefname{claim}{Claim}{Claims}
\crefname{step}{Step}{Steps}
\crefname{case}{Case}{Cases}
\numberwithin{equation}{section}
\newcommand{\injmap}{\hookrightarrow}
\newcommand{\lra}{\longrightarrow}
\newcommand{\bC}{\mathbf{C}}
\newcommand{\bF}{\mathbf{F}}
\newcommand{\bK}{\mathbf{K}}
\newcommand{\bQ}{\mathbf{Q}}
\newcommand{\bR}{\mathbf{R}}
\newcommand{\bZ}{\mathbf{Z}}
\newcommand{\bk}{\mathbf{k}}
\newcommand{\sT}{\mathsf{T}}
\newcommand{\A}{\mathsf{A}}
\newcommand{\ch}{\operatorname{char}}
\newcommand{\cl}{\operatorname{cl}}
\newcommand{\dR}{\operatorname{dR}}
\newcommand{\End}{\operatorname{End}}
\newcommand{\et}{{\textrm{\'et}}}
\newcommand{\id}{\operatorname{id}}
\newcommand{\im}{\operatorname{Im}}
\newcommand{\isom}{\simeq}
\newcommand{\Ker}{\operatorname{Ker}}
\newcommand{\N}{\mathsf{N}}
\newcommand{\Tr}{\operatorname{Tr}}
\newcommand{\Z}{\mathsf{Z}}
\begin{document}

\title{An inequality on polarized endomorphisms}

\author{Fei Hu}
\address{Department of Mathematics, University of Oslo, Niels Henrik Abels hus, Moltke Moes vei 35, 0851 Oslo, Norway}
\email{\href{mailto:fhu@math.uio.no}{\tt fhu@math.uio.no}}
\author{Tuyen Trung Truong}
\address{Department of Mathematics, University of Oslo, Niels Henrik Abels hus, Moltke Moes vei 35, 0851 Oslo, Norway}
\email{\href{mailto:tuyentt@math.uio.no}{\tt tuyentt@math.uio.no}}

\begin{abstract}
We show that assuming the standard conjectures, for any smooth projective variety $X$ of dimension $n$ over an algebraically closed field, there is a constant $C>0$ such that for any positive rational number $r$ and for any polarized endomorphism $f$ of $X$, we have
\[
\| G_r \circ f \| \le C \deg(G_r \circ f),
\]
where $G_r$ is a correspondence of $X$ so that for each $0\le i\le 2n$ its pullback action on the $i$-th Weil cohomology group is the multiplication-by-$r^i$ map.
This inequality has been conjectured by the authors to hold in a more general setting, which - in the special case of polarized endomorphisms - confirms the validity of the analog of a well known result by Serre in the K\"ahler setting. 
\end{abstract}

\subjclass[2020]{
14G17,	
14F20,  
14C25,  
37P25.  
}


\keywords{polarized endomorphism, standard conjectures, correspondence, Weil cohomology, algebraic cycle, positive characteristic}

\thanks{The authors are supported by Young Research Talents grant \#300814 from the Research Council of Norway.}

\maketitle

\section{Introduction}

Let $X$ be a smooth projective variety of dimension $n$ over an algebraically closed field $\bk$ of arbitrary characteristic and let $H_X$ be a fixed ample divisor on $X$.
Fix a Weil cohomology theory $H^\bullet(X)$ with coefficients in a field $\bF$ of characteristic zero (see \cite[\S 3]{Kleiman94}).
Let $r\in \bQ_{>0}$ be a positive rational number.
Let $\gamma_{r}$ be the homological correspondence of $X$, i.e.,
\[
\gamma_{r} \in H^{2n}(X\times X) \isom \bigoplus_{i=0}^{2n} H^i(X)\otimes_\bF H^{2n-i}(X) \isom \bigoplus_{i=0}^{2n} \End_{\bF}(H^i(X)),
\]
such that its pullback $\gamma_{r}^*$ on $H^i(X)$ is the multiplication-by-$r^i$ map for each $i$.
Note that $\gamma_{r}$ commutes with all homological correspondences of $X$.

If we assume that the standard conjecture $C$ holds on $X$, then the above $\gamma_r$ is algebraic (see \cite[Lemma~4.4]{HT}), and hence is represented by a rational algebraic $n$-cycle $G_r$ on $X\times X$, i.e., $\gamma_r = \cl_{X\times X}(G_r)$.
It is well known that the real vector space $\N^n(X\times X)_\bR$ of numerical cycle classes of codimension $n$ on $X\times X$ is finite dimensional; we thus fix a norm $\|\cdot\|$ on it.
We also fix a degree function $\deg$ on $\N^n(X\times X)_\bR$ with respect to the fixed ample divisor $H_{X\times X} \coloneqq p_1^*H_X + p_2^*H_X$ by setting $\deg(g) \coloneqq g \cdot H_{X\times X}^n$.
The main result of this note is an inequality concerning the norm and the degree of the composite correspondence $G_r\circ f$ of the above $G_r$ and any polarized endomorphism $f$ (viewed as a correspondence), assuming the standard conjectures.
More precisely, we have:

\begin{theorem}
\label{thm:A}
Suppose that the standard conjecture $B$ holds on $X$ and the standard conjecture of Hodge type holds on $X\times X$.
Then for any $r\in \bQ_{>0}$, the above homological correspondence $\gamma_{r}$ of $X$ is algebraic and represented by a rational algebraic $n$-cycle $G_r$ on $X\times X$; moreover, there exists a constant $C>0$ independent of $r$, so that for any polarized endomorphism $f$ of $X$ (i.e., $f^*H_X \sim qH_X$ for some $q\in \bZ_{>0}$), we have
\begin{equation}
\label{eq:A}
\|G_r\circ f\| \leq C \deg(G_r\circ f).
\end{equation}
\end{theorem}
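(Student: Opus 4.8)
The strategy is to compute both sides of \eqref{eq:A} as explicitly as possible in terms of the defining data of $f$ and $r$, and to reduce everything to a statement about a single polarized endomorphism via a rescaling trick. First I would set up coordinates: since $f$ is polarized with $f^*H_X \sim qH_X$, the action $f^*$ on each Weil cohomology group $H^i(X)$ has all its eigenvalues of archimedean absolute value $q^{i/2}$ (this is the standard consequence of the Weil conjectures / Hodge index estimates for polarized endomorphisms that one gets from the standard conjecture of Hodge type; it is the algebraic analog of the fact Serre proved in the Kähler setting, and is exactly the input the hypothesis on $X\times X$ is meant to supply). Consequently the composite $\gamma_r \circ f^*$ acts on $H^i(X)$ with eigenvalues of absolute value $(rq^{1/2})^i$, i.e. $\gamma_r\circ f^*$ behaves on $H^i$ like the correspondence $\gamma_{r\sqrt q}$ up to a bounded-size unipotent/semisimple discrepancy. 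The point of extracting $G_r$ as a genuine algebraic cycle (via standard conjecture $B$, which gives algebraicity of $\gamma_r$ as in \cite[Lemma~4.4]{HT}) is that $\|G_r\circ f\|$ and $\deg(G_r\circ f)$ are then honest numerical invariants of an algebraic cycle on $X\times X$, to which the standard conjecture of Hodge type on $X\times X$ applies.

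The second step is to express $\deg(G_r\circ f)$ cohomologically. Writing $\pi_i$ for the Künneth projectors (algebraic by $B$), one decomposes $G_r\circ f = \sum_{i=0}^{2n} r^i\,(\pi_i\circ f)$ in $\N^n(X\times X)_\bR$, and the degree $\deg(G_r\circ f) = (G_r\circ f)\cdot H_{X\times X}^n$ is a polynomial in $r$ whose coefficients are the intersection numbers $(\pi_i\circ f)\cdot H_{X\times X}^n$; these can be evaluated by Lefschetz-type formulas as traces of $f^*$ on the graded pieces, so that $\deg(G_r\circ f) = \sum_i c_i(f)\, r^i$ with $c_i(f)$ a trace-type quantity of size comparable to $q^{i/2}\cdot(\text{Betti number})$. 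The key positivity input is that the leading behavior in $r$ is governed by the top Künneth component, and more importantly that $\deg(G_r\circ f)>0$ for all $r>0$ and all polarized $f$ — this follows because $H_{X\times X}$ is ample and $\pi_i$, suitably signed, are effective-type classes controlled by the Hard Lefschetz decomposition, whose positivity is precisely the content of the Hodge-type conjecture on $X\times X$.

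The third step controls the norm: $\|G_r\circ f\| \le \sum_i r^i \|\pi_i\circ f\|$, and one needs $\|\pi_i\circ f\| \le C' q^{i/2}$ with $C'$ independent of $f$ and $i$. This is where the Hodge-type standard conjecture on $X\times X$ does the real work: it endows $\N^n(X\times X)_\bR$ with a polarized Hodge-theoretic structure in which the norm of an algebraic cycle is comparable to a sum of squares of its cohomological components, and the cohomological components of $\pi_i\circ f$ are built from $f^*|_{H^i(X)}$, whose operator norm is $O(q^{i/2})$ by the polarization estimate from Step 1. Combining Steps 2 and 3, \eqref{eq:A} reduces to the elementary inequality $\sum_i r^i q^{i/2} \le C \sum_i c_i(f) r^i$ for all $r>0$, $q\ge 1$; since $c_i(f) \asymp q^{i/2}$ (up to constants depending only on $X$) and all terms are positive, this holds with a constant $C$ depending only on $X$ — the uniformity in $r$ is automatic because $r$ enters both sides through the same monomials $r^i$, and the uniformity in $f$ is automatic because everything is controlled by the single integer $q$ through the exponent $i/2$. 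The main obstacle is Step 3, specifically verifying that the chosen norm on $\N^n(X\times X)_\bR$ is comparable — with constants independent of $f$ — to the Hodge-theoretic norm coming from the Hodge-type conjecture, and checking that the operator-norm bound on $f^*|_{H^i}$ is genuinely uniform over all polarized endomorphisms and not merely over a fixed $f$; this requires that the Weil-conjecture-type eigenvalue bound is applied on $X$ itself (where $f$ acts) rather than extracted after composing, and that the bound $|{\rm eigenvalue}| = q^{i/2}$ is used, not just $|{\rm eigenvalue}| \le q^{i/2}$, so that the lower bound $c_i(f)\gtrsim q^{i/2}$ in Step 2 holds as well.
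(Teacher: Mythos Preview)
Your overall strategy matches the paper's: decompose $G_r\circ f$ into K\"unneth pieces $r^i(\pi_i\circ f)$, bound each $\|f^*|_{H^i(X)}\|$ by a constant times $q^{i/2}$ using the positive-definite form supplied by the Hodge-type conjecture, and compare to an explicit degree formula. The paper obtains the bound $\|f^*|_{H^i(X)}\| = \sqrt{b_i(X)}\,q^{i/2}$ not via eigenvalues but via the clean identity $(\pi_i\circ f)\circ(\pi_i\circ f)' = q^i\pi_i$, computed directly from the primitive decomposition; this gives the operator norm exactly, without having to argue separately about semisimplicity.

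There is, however, a genuine gap in your degree computation. You write $\deg(G_r\circ f) = \sum_{i} c_i(f)\,r^i$ and assert $c_i(f)\asymp q^{i/2}$ for all $i$. But $(\pi_i\circ f)\cdot H_{X\times X}^n = 0$ whenever $i$ is odd, since $H_{X\times X}^n$ is an algebraic class and pairs trivially with the odd K\"unneth component. In fact the paper computes
\[
\deg(G_r\circ f) = \sum_{k=0}^{n}\binom{n}{k} r^{2k}q^{k},
\]
so only \emph{even} powers of $r$ appear. Your final reduction ``$\sum_i r^i q^{i/2} \le C\sum_i c_i(f) r^i$ holds because $r$ enters both sides through the same monomials'' therefore fails: the odd monomials $r^{2k+1}$ are present on the left (from $\|g^*|_{H^{2k+1}}\|$) but absent on the right. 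The paper closes this gap with an AM--GM step,
\[
r^{2k+1}q^{(2k+1)/2} \le \tfrac{1}{2}\bigl(r^{2k}q^k + r^{2k+2}q^{k+1}\bigr),
\]
which bounds each odd-cohomology contribution by the adjacent even-degree terms $\deg_k(g)$ and $\deg_{k+1}(g)$. Without this observation the argument is incomplete, and your remark that one needs the lower bound $c_i(f)\gtrsim q^{i/2}$ cannot be salvaged for odd $i$.
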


\begin{remark}
(1) Serre \cite{Serre60} proved a result involving eigenvalues of pullbacks on cohomology by polarized endomorphisms of compact K\"ahler manifolds. If the analog of Serre's result holds in positive characteristic, then Weil's Riemann hypothesis follows. 
Grothendieck \cite{Grothendieck69} and Bombieri independently proposed the so-called standard conjectures in order to solve this positive characteristic analog of Serre's result (see also Kleiman's survey articles \cite{Kleiman68,Kleiman94} for details on the standard conjectures). 

It was Deligne \cite{Deligne74} who ingeniously solved Weil's Riemann hypothesis and also generalized it in a form which can be applied to many exponential sums \cite{Deligne80} - the latter being of great interest in analytic number theory. Deligne's proofs, however, are different from what envisioned by Grothendieck. As of today, the standard conjectures, and also the positive characteristic analog of Serre's result, are still widely open. For example, the standard conjecture $D$ is only known in few cases (including the codimension-$1$ case, Abelian varieties \cite{Clozel99}), and the standard conjecture of Hodge type is known only for surfaces and Abelian $4$-folds \cite{Ancona21}. 

(2) The authors of this note conjectured in \cite{HT} the inequality \eqref{eq:A} in \cref{thm:A} (in the more general setting of effective correspondences), whose validity will imply, among other things, an earlier conjecture by the second author (see \cite[Question~2]{Truong}). The latter contains as a special case the positive characteristic analog of Serre's result (and hence Weil's Riemann hypothesis as well). We have shown in \cite{HT} that \eqref{eq:A} indeed holds for Abelian varieties (in all dimensions and for all effective correspondences). It also has certain descent properties for generically finite surjective morphisms or even some dominant rational maps, and hence holds for instance for Kummer surfaces. It then has been argued in \cite{HT} that this inequality could be a simpler alternative way in solving the positive characteristic analog of Serre's result considering the difficulty of the standard conjectures. Our \cref{thm:A} confirms that this is indeed the case: for polarized endomorphisms, our inequality \eqref{eq:A} follows from the standard conjectures.
We thus wonder if the general version of the inequality \eqref{eq:A} for effective correspondences is also a consequence of the standard conjectures.
\end{remark}

\section{Proof of Theorem \ref{thm:A}}

Recall that $X$ is a smooth projective variety of dimension $n$ over an algebraically closed field $\bk$ of arbitrary characteristic and $H_X$ is a fixed ample divisor on $X$.
We also fix a Weil cohomology theory $H^\bullet(X)$ with a coefficient field $\bF$ of characteristic zero (see \cite[\S 3]{Kleiman94}).
In particular, we have a cup product $\cup$, Poincar\'e duality, the K\"unneth formula, the cycle class map $\cl_X$, the Lefschetz trace formula, the weak Lefschetz theorem, and the hard Lefschetz theorem.
Examples of classical Weil cohomology theories include:
\begin{itemize}
\item de Rham cohomology $H^\bullet_{\dR}(X(\bC), \bC)$ if $\bk \subseteq \bC$,
\item \'etale cohomology $H^\bullet_{\et}(X, \bQ_\ell)$ with $\ell \neq \ch(\bk)$ if $\bk$ is arbitrary,\footnote{The hard Lefschetz theorem turns out to be very difficult (see \cite[Theorem~4.1.1]{Deligne80}).}
\item crystalline cohomology $H^\bullet_{\rm crys}(X/W(\bk))\otimes \bK$, where $\bK$ is the field of fractions of the Witt ring $W(\bk)$. 
\end{itemize}

For the fixed ample divisor $H_X$ on $X$ and for $0\le i\le 2n-2$, we let
\begin{equation}
\label{eq:Lef}
\begin{array}{ccc}
L \colon H^i(X) &\to& H^{i+2}(X), \\[4pt]
\ \ \alpha &\mapsto& \cl_X(H_X) \cup \alpha
\end{array}
\end{equation}
be the {\it Lefschetz operator}.

By the hard Lefschetz theorem, for any $0\le i\le n$, the $(n-i)$-th iterate $L^{n-i}$ of the Lefschetz operator $L$ is an isomorphism
\[
L^{n-i} \colon H^i(X) \xrightarrow{\ \sim\ } H^{2n-i}(X).
\]
However, $L^{n-i+1} \colon H^i(X) \to H^{2n-i+2}(X)$ may have a nontrivial kernel.
Denote by $P^{i}(X)$ the set of elements $\alpha\in H^i(X)$, called {\it primitive}, satisfying $L^{n-i+1}(\alpha) = 0$, namely,
\begin{equation}
P^{i}(X) \coloneqq \Ker(L^{n-i+1} \colon H^i(X) \to H^{2n-i+2}(X)) \subseteq H^{i}(X).
\end{equation}
This gives us the following primitive decomposition (a.k.a. Lefschetz decomposition):
\begin{equation}
H^i(X) = \bigoplus_{j\ge i_0} L^j P^{i-2j}(X),
\end{equation}
where $i_0 \coloneqq \max(i-n,0)$.

\begin{definition}[{cf.~\cite[\S 1.4]{Kleiman68}}]
For any $\alpha \in H^i(X)$, we write
\begin{equation}
\label{eq:primitive}
\alpha = \sum_{j\ge i_0} L^j(\alpha_j), \quad \alpha_j \in P^{i-2j}(X).
\end{equation}
Then we define an operator $*$ as follows:
\begin{equation}
\label{eq:star}
\begin{array}{ccc}
* \colon H^i(X) &\to& H^{2n-i}(X), \\[4pt]
\ \ \alpha &\mapsto& \displaystyle \sum_{j\ge i_0} (-1)^{\frac{(i-2j)(i-2j+1)}{2}} L^{n-i+j}(\alpha_j).
\end{array}
\end{equation}
\end{definition}
It is easy to check that $*^2=\id$.
The standard conjecture $B(X)$ predicts that the above homological correspondence $*$ is algebraic (cf.~\cite[Proposition~2.3]{Kleiman68}).

For any correspondence $g$ of $X$, denote by $g'$ its adjoint with respect to the following non-degenerate bilinear form
\begin{equation}
\label{eq:Weil-form}
\begin{array}{ccc}
H^i(X) \times H^i(X) & \lra & \bF \\[4pt]
(\alpha, \beta) & \mapsto & \langle \alpha, \beta \rangle \coloneqq \alpha \cup *\beta.
\end{array}
\end{equation}
In other words, we have $g' = *\circ g^\sT \circ *$ by definition, where $g^\sT$ denotes the canonical transpose of $g$ by interchanging the coordinates.

For any $0\le k\le n$, let $\A^k(X)\subseteq H^{2k}(X)$ denote the $\bQ$-vector space of cohomology classes generated by algebraic cycles of codimension $k$ on $X$ under the cycle class map $\cl_X$, i.e.,
\[
\A^k(X) \coloneqq \im(\cl_X \colon \Z^k(X)_\bQ \lra H^{2k}(X)).
\]
The standard conjecture of Hodge type predicts that when restricted to $\A^k(X)$ the bilinear form \eqref{eq:Weil-form} is positive definite (see \cite[\S 3]{Kleiman68} for details).

\begin{lemma}
\label{lemma:A}
Let $\pi_i \in H^i(X) \otimes H^{2n-i}(X)$ be the $i$-th K\"unneth component of the diagonal class, which corresponds to the projection operator $H^\bullet(X)\to H^i(X)$ via the pullback.
Then for any polarized endomorphism $f$ of $X$ (i.e., $f^*H_X \sim qH_X$ for some $q\in \bZ_{>0}$), we have
\[
(\pi_i \circ f) \circ (\pi_i \circ f)' = q^i \pi_i
\]
as homological correspondences.
\end{lemma}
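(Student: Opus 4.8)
The plan is to pass to the operators induced on $H^\bullet(X)$, where the identity becomes a short computation with the Lefschetz structure. Note that the operator $*$, and hence $g' = *\circ g^\sT\circ *$ for any correspondence $g$, make sense unconditionally (by the hard Lefschetz theorem built into a Weil cohomology theory), so the lemma itself requires no conjectures; the standard conjectures enter \cref{thm:A} only later, to make $*$ algebraic and to provide positivity.

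The first step is to record that $f^*$ respects the primitive decomposition. From $f^*H_X\sim qH_X$ we get $\cl_X(f^*H_X)=q\,\cl_X(H_X)$, so multiplicativity of $f^*$ yields $f^*(L\alpha)=q\,L(f^*\alpha)$ for all $\alpha$, and inductively $f^*\circ L^m=q^m\,L^m\circ f^*$ for every $m\ge 0$. Consequently $f^*$ maps each primitive subspace $P^i(X)=\Ker(L^{n-i+1}\colon H^i(X)\to H^{2n-i+2}(X))$ into itself, and for $\alpha=\sum_{j\ge i_0}L^j\alpha_j$ with $\alpha_j\in P^{i-2j}(X)$ as in \eqref{eq:primitive} the primitive decomposition of $f^*\alpha$ is $f^*\alpha=\sum_{j\ge i_0}q^jL^j(f^*\alpha_j)$. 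Substituting this into the definition \eqref{eq:star} of $*$ and comparing term by term, the signs match verbatim and the powers of $q$ collect to give
\[
*(f^*\alpha)=q^{\,i-n}\,f^*(*\alpha)\qquad\text{for all }\alpha\in H^i(X).
\]

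Next I would use that a polarized endomorphism is finite of degree $q^n$ (from $(f^*H_X)^n=q^nH_X^n$), so that $\int_X f^*\omega=q^n\int_X\omega$ on $H^{2n}(X)$. Combining this with the previous display and multiplicativity of $f^*$, for any $\alpha,\beta\in H^i(X)$ one computes
\[
\langle f^*\alpha,\,f^*\beta\rangle=\int_X f^*\alpha\cup *(f^*\beta)=q^{\,i-n}\int_X f^*(\alpha\cup *\beta)=q^{\,i-n}\cdot q^n\,\langle\alpha,\beta\rangle=q^i\,\langle\alpha,\beta\rangle.
\]
Writing $F\coloneqq f^*|_{H^i(X)}$, this says that the adjoint $F^{\dagger}$ of $F$ with respect to the form $\langle\,,\,\rangle$ satisfies $F^{\dagger}F=q^i\,\id_{H^i(X)}$; since $f$ is finite surjective, $F$ is invertible, so also $FF^{\dagger}=q^i\,\id_{H^i(X)}$.

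Finally I would translate back. A homological correspondence is determined by its action under $H^{2n}(X\times X)\isom\bigoplus_i\End_\bF(H^i(X))$; the correspondence $\pi_i$ acts as the projector onto $H^i(X)$, $f$ acts as the grading-preserving operator $f^*$, so $\pi_i\circ f$ acts as $F$ (extended by zero off $H^i(X)$), and by definition $(\pi_i\circ f)'$ acts as $F^{\dagger}$ (extended by zero). Hence $(\pi_i\circ f)\circ(\pi_i\circ f)'$ acts as $q^i$ times the projector onto $H^i(X)$, which is exactly the action of $q^i\pi_i$, and the lemma follows. The only genuine content is the first step — that $f^*$ commutes with $*$ up to the scalar $q^{\,i-n}$, which rests on $f^*$ preserving the Lefschetz decomposition; everything afterwards is formal and, because $q^i\,\id$ is central, insensitive to the various sign and ordering conventions for composition, transpose, and adjoint of correspondences.
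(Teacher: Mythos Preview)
Your proof is correct and rests on the same key observation as the paper's: since $f^*H_X\sim qH_X$, the pullback $f^*$ commutes with the Lefschetz operator up to the scalar $q$ and therefore preserves the primitive decomposition. From there the two arguments diverge only in packaging. The paper expands $((\pi_i\circ f)\circ(\pi_i\circ f)')^*(\alpha)=*\circ f_*\circ *\circ f^*(\alpha)$ term by term, using the projection formula together with $f_*H_X^{\,m}=q^{\,n-m}H_X^{\,m}$ to produce the factor $q^i$ directly. You instead extract the clean intertwining relation $*\circ f^*=q^{\,i-n}\,f^*\circ *$ on $H^i(X)$ and combine it with $\int_X f^*=q^n\int_X$ to get $\langle f^*\alpha,f^*\beta\rangle=q^i\langle\alpha,\beta\rangle$, i.e., $F^\dagger F=q^i\,\id$; invertibility of $F$ then gives $FF^\dagger=q^i\,\id$ as well, so the convention for composing correspondences is irrelevant. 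Your route is slightly more conceptual and yields the reusable byproduct that $f^*$ scales the Hodge--Riemann form by $q^i$, while the paper's is a more hands-on verification; mathematically they are the same computation viewed from adjoint sides.
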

\begin{proof}
Note that for any $\alpha \in H^i(X)$ with the above primitive decomposition \eqref{eq:primitive},
\[
\sum_{j\ge i_0} L^j(q^j f^*\alpha_j) \text{ with } f^*(\alpha_j) \in P^{i-2j}(X)
\]
is the primitive decomposition of $f^*(\alpha)$.
It follows that
\begin{align*}
((\pi_i \circ f) \circ (\pi_i \circ f)')^*(\alpha) &= *\circ (\pi_i \circ f)_* \circ * \circ (\pi_i \circ f)^*(\alpha) \\
&= * \circ (\pi_i \circ f)_* \circ * \circ f^*(\alpha) \\
&= * \circ \pi_{2n-i}^* \circ f_*\sum_{j\ge i_0} (-1)^{\frac{(i-2j)(i-2j+1)}{2}} L^{n-i+j}(q^j f^*\alpha_j) \\
&= * \sum_{j\ge i_0} (-1)^{\frac{(i-2j)(i-2j+1)}{2}} f_*(H_X^{n-i+j} \cup q^j f^*\alpha_j) \\
&= * \sum_{j\ge i_0} (-1)^{\frac{(i-2j)(i-2j+1)}{2}} f_*H_X^{n-i+j} \cup q^j \alpha_j \\
&= * \sum_{j\ge i_0} (-1)^{\frac{(i-2j)(i-2j+1)}{2}} q^{i-j} H_X^{n-i+j} \cup q^j \alpha_j \\
&= * \sum_{j\ge i_0} (-1)^{\frac{(i-2j)(i-2j+1)}{2}} q^i L^{n-i+j}(\alpha_j) \\
&= q^i *^2 \alpha = q^i \alpha,
\end{align*}
where $\pi_i^*$ and $(\pi_i)_* = \pi_{2n-i}^*$ are projections to $H^i(X)$ and $H^{2n-i}(X)$, respectively, the third equality follows from the definition of the $*$ operator, the fifth one follows from the projection formula, and the last one follows from the fact that $*^2 = \id$.
This yields the lemma.
\end{proof}

\begin{proof}[Proof of Theorem~\ref{thm:A}]
Since the standard conjecture $B(X)$ implies the standard conjecture $C(X)$, the algebraicity of $\gamma_r$ follows from \cite[Lemma~4.4]{HT}.
Also, by assumption, the bilinear form \eqref{eq:Weil-form} is a Weil form; see \cite[Theorem~3.11]{Kleiman68}.
In particular, if we let $\Delta_i\in \Z^n(X\times X)_\bQ$ represent $\pi_i$ and let $f_i$ denote the composite correspondence $\Delta_i \circ f$, then the square root of
\[
\Tr((f_i \circ f_i')^*|_{H^\bullet(X)}) = \Tr((f_i \circ f_i')^*|_{H^i(X)}) \in \bQ_{>0}
\]
gives us a norm of $f^*|_{H^i(X)}$.
On the other hand, it follows from \cref{lemma:A} that
\[
\Tr((f_i \circ f_i')^*|_{H^i(X)}) = q^i b_i(X),
\]
where $b_i(X) \coloneqq \dim_\bF H^i(X)$ is the $i$-th Betti number of $X$.
Putting together, we thus obtain that
\[
\big\|f^*|_{H^i(X)}\big\| = \sqrt{b_i(X)} \, q^{i/2}.
\]

Now, we let $g$ denote $G_r\circ f$.
By assumption, the standard conjecture $D$ holds on $X\times X$ (see \cite[Corollaries~3.9, 2.5, and 2.2]{Kleiman68}) and hence the cycle class map induces an injective map
\[
\N^n(X\times X) \otimes_\bZ \bF \injmap H^{2n}(X\times X).
\]
It thus follows that
\[
\|g\| \lesssim \|\cl_{X\times X}(g)\|.
\]
Here the right-hand side denotes a norm on $H^{2n}(X\times X) \isom \bigoplus_{i=0}^{2n} \End_{\bF}(H^i(X))$ which is equivalent to
\[
\max_{0\le i\le 2n} \big\|g^*|_{H^i(X)}\big\|.
\]
Note that the above equivalence part depends on the choices of norms.
Also, by the definitions of $G_r$ and $f$ we have that $g^*|_{H^i(X)} = r^i f^*|_{H^i(X)}$ and
\[
\deg(g) = g \cdot H_{X\times X}^n = \sum_{k=0}^n \binom{n}{k} \deg_k(g) = \sum_{k=0}^n \binom{n}{k} r^{2k} q^k.
\]
If $i=2k$ is even, then we have
\[
\big\|g^*|_{H^i(X)}\big\| = r^{2k} \big\|f^*|_{H^{2k}(X)}\big\| = r^{2k} \sqrt{b_{2k}(X)} \, q^{k} = \sqrt{b_{2k}(X)} \deg_k(g).
\]
When $i=2k+1$ is odd, similarly, we also have
\begin{align*}
\big\|g^*|_{H^i(X)}\big\| &= r^{2k+1} \big\|f^*|_{H^{2k+1}(X)}\big\| \\
&= r^{2k+1} \sqrt{b_{2k+1}(X) \, q^{2k+1}} \\
&\le \sqrt{b_{2k+1}(X)} \, (r^{2k} q^k + r^{2k+2} q^{k+1})/2 \\
&\le \sqrt{b_{2k+1}(X)} \, \max\{r^{2k} q^k, r^{2k+2} q^{k+1}\} \\
&\le \sqrt{b_{2k+1}(X)} \, \max\{\deg_k(g), \deg_{k+1}(g)\}.
\end{align*}
So overall, there is a constant $C$ depending only on the Betti numbers $b_i(X)$ of $X$, the dimension $n$ of $X$, and norms we have chosen, but independent of $f$ nor $r$, such that $\|g\| \le C \deg(g)$.
We thus prove \cref{thm:A}.
\end{proof}


\bibliographystyle{amsalpha}
\bibliography{../mybib}

\end{document}